\newtheorem{cor}{Corollary}[section]
\newtheorem{prop}{Proposition}[section]
\newtheorem{rem}{Remark}[section]
\numberwithin{equation}{section}
\begin{document}

\title{\Large {\bf
{On differentiation with respect to parameters of the functions of the Mittag-Leffler type}}}

\author{Sergei V. Rogosin$^{a,\ast}$,\footnote{Corresponding author}
Filippo Giraldi$^{b,c}$ Francesco Mainardi$^d$}

\maketitle

\smallskip
\begin{center}
{$^a$ Department of Economics, Belarusian State University,\\ Nezavisimosti ave 4,  220030 Minsk, Belarus,\\
$^b$ Section of Mathematics, International Telematic University Uninettuno, \\ Corso Vittorio Emanuele II 39, 00186, Rome, Italy\\
{$^c$ School of Chemistry and Physics, University of KwaZulu-Natal, \\ Westville Campus, Durban 4000, South Africa} \\
$^d$ Department of Physics $\&$ Astronomy,
University of Bologna, and INFN,
\\ Via Irnerio 46, I-40126 Bologna, Italy}
\end{center}

\begin{abstract}
The formal term-by-term differentiation with respect to parameters is demonstrated to be legitimate for the Mittag-Leffler type functions. The justification of differentiation formulas is made by using the concept of the uniform convergence. This approach is applied to the Mittag-Leffler function depending on two parameters and, additionally, for the $3$-parametric Mittag-Leffler functions (namely, for the Prabhakar function and the Le Roy type functions), as well as for the $4$-parametric Mittag-Leffler function (and, in particular, for the Wright function). The differentiation with respect to the involved parameters is discussed also in case those special functions which are represented via the Mellin-Barnes integrals.


\vspace{1mm}\noindent
{\bf AMS Subject Classification 2020:}  33B15, 33E12, 33C60
\\
{\bf Keywords:}  Mittag-Leffler function, Le Roy type function,\\  differentiation,
 Mellin-Barnes integral
\\
{\bf Paper published on line 19 November 2024  in} 
\\ {\bf Integral Transforms and Special Functions},
\\
{\bf DOI:10.1080/10652469.2024.2428850}
\end{abstract}


\maketitle

\section{Introduction}
\label{Intro}

An interest to the differential properties of special functions with respect to their parameters is recently growing. Respective formulas become the sources for new classes of special functions as well as for differential equations of new types. They are important as for better understanding of the behaviour of special functions as for their applications (see, e.g. \cite{Lan00}). A number of articles are devoted to the differentiation of the Bessel-type functions, see e.g.
\cite{ApeKra85},  \cite{Bry16} and references therein. In particular,
the differential equation for the derivatives of the Bessel function with respect to parameter was used in \cite{Dun17} for derivation of
an integral representation for $\frac{\partial J_{\nu}(z)}{\partial \nu}$.

Recently,  Apelblat
\cite{Ape20} and   Apelblat and Mainardi  \cite{ApeMai22} proposed an approach for differentiation of the Mittag-Leffler and Wright  functions (see, e.g. \cite{GKMR}), respectively,   with respect to parameters.
The obtained formulas are derived by using the formal term-by-term differentiation of the series represented those special  functions.
 So, the derivatives with respect the parameters are again series
and one can ask where these series do converge.

Here we proposed two ways of justification of the above said formulas. For simplicity we apply our approaches to the Mittag-Leffler function
\begin{equation}
\label{M-L1}
E_{\alpha,\beta}(z) = \sum\limits_{k=0}^{\infty} \frac{z^k}{\Gamma(\alpha k + \beta)},
\end{equation}
which is highly related to the Wright function.  The same ideas can be realized for other special functions such as the Wright function, the Prabhakar function (three parametric Mittag-Leffler function) (see, e.g. \cite{GKMR}) {and Le Roy type functions recently extendedly studied in the series of articles \cite{RogDub23,RogDub24,P-KKiRoDu23a,P-KKiRoDu23b}}.

The first way of justifying the formal term-by-term differentiation is based on the uniform convergence property \cite{Har18,Rud76}. This property is described below for the sake of clarity. Let $f_0, f_1, \ldots$, be a sequence of real or complex valued functions defined, and differentiable, over an interval $\left[a, b\right]$, such that the following limit: $\lim_{n \to +\infty} f_n\left(x\right)$, exists and is finite for some element $x_0$ belonging to the interval $\left[a, b\right]$. Let the sequence of the derivatives $f^{\prime}_0, f^{\prime}_1, \ldots$, converges uniformly on the interval $\left[a, b\right]$. Under these conditions, the sequence $f_0, f_1, \ldots$ converges uniformly  to a function $f$ on the interval $\left[a, b\right]$, and $f^{\prime} \left(x\right)= \lim_{n\to +\infty}f^{\prime}_n \left(x\right)$, for every $x \in \left[a, b\right]$. The uniform convergence of the series composed by the term-by-term derivatives is demonstrated below by adopting the Weierstrass $M$-test theorem. The second way of justifying the term-by-term differentiation relies on the representation of the involved special functions via the Mellin-Barnes integrals.

\section{Differentiability of Mittag-Leffler function with respect to parameters}
\label{M-L}

It is known that for ${\mathrm{Re}}\, \alpha > 0, \beta\in {\mathbb C}$ the Mittag-Leffler function (\ref{M-L1}) is an entire function of the complex variable $z$. In this part we apply our approach only in the case $\alpha > 0, \beta\geq 0$ which is most relievable to application.
Term-by-term differentiation of series (\ref{M-L1}) gives
\begin{equation}
\label{M-L_diff1}
\frac{\partial E_{\alpha,\beta}(z)}{\partial \alpha} = - \sum\limits_{k=1}^{\infty} \frac{k \psi(\alpha k + \beta) z^k}{\Gamma(\alpha k + \beta)},
\end{equation}
where $\psi(x) = \frac{\Gamma^{\prime}(x)}{\Gamma(x)}$ is so-called $\psi$-function or the digamma function (see e.g. \cite[Ch. 5]{NIST}).

Let us fix $\alpha > 0, \beta\geq 0$ then $\alpha k + \beta > 0$ for all {$k\in {\mathbb N}$}.
To show convergence of series in (\ref{M-L_diff1}) we use the inequality
\begin{equation}
\label{psi1}
\ln (x + \frac{1}{2}) \leq \psi(x + 1) \leq \ln (x + e^{-\gamma}),
\end{equation}
valid for all $x > 0$ \cite{Alz97} with $\gamma := \lim\limits_{n\rightarrow \infty} \left(- \ln n + \sum\limits_{k=1}^{n} \frac{1}{k}\right)$ being the Euler-Mascheroni constant (see also \cite{GuoQi14} for the best possible inequalities of such type),
which leads to the following asymptotic formula
\begin{equation}
\label{psi2}
\psi(\alpha k + \beta) = {\ln \left(k [1 + o(1)]\right)},\;\;\; k \rightarrow \infty,
\end{equation}
and also Stirling-like formula (see, e.g. \cite[Appendix A]{GKMR})
\begin{equation}
\label{gamma1}
\Gamma(\alpha k + \beta) = \sqrt{2\pi} (\alpha k)^{\alpha k + \beta - 1/2} e^{-\alpha k} [1 + o(1)],\;\;\; k \rightarrow \infty,
\end{equation}
which leads to the following asymptotic formula \cite[1.18(4)]{BatErd54a}
\begin{equation}
\label{gamma2}
\frac{\Gamma(\alpha (k+1) + \beta)}{\Gamma(\alpha k + \beta)} = (\alpha k)^{\alpha} \left[1 + \frac{\alpha(\alpha - 1)}{2 \alpha k} + O\left(\frac{1}{(\alpha k)^2}\right)\right],
k \rightarrow \infty.
\end{equation}

Thus, we can calculate the radius of convergence of the series in (\ref{M-L_diff1}), which has the form $\sum\limits_{k=1}^{\infty} a_k z^k$,
\begin{equation}
\label{radius1}
R = \lim_{k\rightarrow\infty} \frac{\mid a_k\mid}{\mid a_{k+1}\mid} = \lim_{k\rightarrow\infty} \frac{k \psi(\alpha k + \beta)}{(k+1) \psi(\alpha (k+1) + \beta)} \frac{\Gamma(\alpha (k + 1) + \beta)}{\Gamma(\alpha k + \beta)} =
\end{equation}
$$
\lim_{k\rightarrow\infty} \frac{k (\ln k [1 + o(1)])}{(k + 1) (\ln (k + 1) [1 + o(1)])} (\alpha k)^{\alpha} \left[1 + \frac{\alpha(\alpha - 1)}{2 \alpha k} + O\left(\frac{1}{(\alpha k)^2}\right)\right] = \infty.
$$
Therefore, the series in (\ref{M-L_diff1}) is converging for all $z\in {\mathbb C}$ for any fixed pair $(\alpha, \beta), \alpha > 0, \beta\geq 0$. It gives existence of the derivative $\frac{\partial E_{\alpha,\beta}(z)}{\partial \alpha}$ for all $\alpha > 0, \beta\geq 0$.

\begin{prop}
\label{Diff_ML}
The Mittag-Leffler function $E_{\alpha,\beta}(z)$ is differentiable as a function of the variable $\alpha > 0$ for any fixed value of parameter $\beta \geq 0$ and $z\in {\mathbb C}$ and formula (\ref{M-L_diff1}) holds valid.
\end{prop}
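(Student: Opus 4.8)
The plan is to obtain (\ref{M-L_diff1}) as a legitimate term-by-term differentiation by appealing to the uniform-convergence criterion recalled in Section~\ref{Intro}. Fix $\beta \geq 0$ and $z \in \mathbb{C}$, and let $[a,b]$ be an arbitrary compact subinterval of $(0,\infty)$. For $n \geq 1$ set
\begin{equation*}
S_n(\alpha) = \sum_{k=0}^{n} \frac{z^k}{\Gamma(\alpha k + \beta)}, \qquad \alpha \in [a,b].
\end{equation*}
Since $\alpha k + \beta \geq a k + \beta > 0$ for every $k \geq 1$, each summand with $k \geq 1$ is a smooth function of $\alpha$, whereas the $k=0$ term is constant in $\alpha$ (equal to $1/\Gamma(\beta)$, understood as $0$ when $\beta = 0$). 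Hence $S_n$ is differentiable on $[a,b]$, and since $\frac{\partial}{\partial\alpha}\frac{1}{\Gamma(\alpha k + \beta)} = -\frac{k\,\psi(\alpha k + \beta)}{\Gamma(\alpha k + \beta)}$ we get
\begin{equation*}
S_n'(\alpha) = - \sum_{k=1}^{n} \frac{k\,\psi(\alpha k + \beta)\, z^k}{\Gamma(\alpha k + \beta)}.
\end{equation*}

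The core of the argument is to show that $(S_n')_{n\geq1}$ converges uniformly on $[a,b]$, and for this I would use the Weierstrass $M$-test. From the two-sided bound (\ref{psi1}), used with $x = \alpha k + \beta - 1$, one obtains $|\psi(\alpha k + \beta)| \leq C\ln k$ for all large $k$, uniformly for $\alpha \in [a,b]$, with $C$ depending only on $a,b,\beta$. Since $\Gamma$ is increasing on $[2,\infty)$, for all $k$ with $ak + \beta \geq 2$ we also have $\Gamma(\alpha k + \beta) \geq \Gamma(a k + \beta)$, again uniformly for $\alpha \in [a,b]$. Consequently
\begin{equation*}
\sup_{\alpha \in [a,b]} \left| \frac{k\,\psi(\alpha k + \beta)\, z^k}{\Gamma(\alpha k + \beta)} \right| \leq \frac{C\, k \ln k\, |z|^k}{\Gamma(a k + \beta)} =: M_k
\end{equation*}
for all large $k$, and $\sum_k M_k < \infty$: indeed, by the Stirling-type formulas (\ref{gamma1})--(\ref{gamma2}) with $a$ in place of $\alpha$, one has $M_{k+1}/M_k \to 0$, which is exactly the radius-of-convergence computation (\ref{radius1}) specialized to the endpoint $\alpha = a$. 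This supplies the required summable majorant, hence uniform convergence of $(S_n')$ on $[a,b]$.

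Given the uniform convergence of $(S_n')$ on $[a,b]$, and observing that $\lim_{n\to\infty} S_n(\alpha) = E_{\alpha,\beta}(z)$ exists for every $\alpha > 0$ — in particular at some point of $[a,b]$ — the uniform-convergence theorem stated in Section~\ref{Intro} yields that $\alpha \mapsto E_{\alpha,\beta}(z)$ is differentiable on $[a,b]$ with $\frac{\partial}{\partial\alpha} E_{\alpha,\beta}(z) = \lim_{n\to\infty} S_n'(\alpha) = -\sum_{k=1}^{\infty} k\,\psi(\alpha k + \beta)\, z^k / \Gamma(\alpha k + \beta)$, i.e. (\ref{M-L_diff1}). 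As $[a,b] \subset (0,\infty)$ was arbitrary, this holds for all $\alpha > 0$. The only genuinely delicate point is the \emph{uniformity} in $\alpha$ of the bounds for $\psi$ and for $1/\Gamma$ over the compact interval $[a,b]$; once these are in place, the conclusion follows from the Weierstrass $M$-test together with the standard theorem on differentiation of uniformly convergent sequences.
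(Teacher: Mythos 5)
Your proposal is correct and follows essentially the same route as the paper: uniform convergence of the termwise-differentiated series on a compact parameter interval $[a,b]$, established via the Weierstrass $M$-test using the logarithmic bounds (\ref{psi1})/(\ref{psi3}) for $\psi(\alpha k+\beta)$, the lower bound $\Gamma(\alpha k+\beta)\geq\Gamma(ak+\beta)$ for large $k$, and the ratio-test/radius-of-convergence computation for the majorant series. Your version is in fact slightly more explicit than the paper's on two points the paper leaves implicit — the introduction of the partial sums $S_n$ with the appeal to the term-by-term differentiation theorem, and the monotonicity of $\Gamma$ on $[2,\infty)$ justifying the uniform lower bound on the denominator.
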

\begin{proof}
The Mittag-Leffler function $E_{\alpha,\beta}(z)$ is differentiable as a function of the variable $\alpha$ whenever the series in (\ref{M-L_diff1}) is converging uniformly with respect to parameter $\alpha\in [a, b]$ and $\beta \in [0, B]$, where $a > 0, b > a, B > 0$ are arbitrary fixed numbers.
It is known (see e.g. \cite{GuoQi14}) that for all $x \geq 1$ the following inequality hold
\begin{equation}
\label{psi3}
\ln (x + \frac{1}{2}) \leq \psi(x + 1) \leq \ln (x + e^{1-\gamma} - 1).
\end{equation}
For all  $\alpha\in [a, b]$ and $\beta \in [0, B]$ we take $k_1 = \frac{2}{a}$ (to have argument of psi-function in (\ref{psi3}) greater than 1 for all $x = \alpha k + \beta$ with $\alpha\in [a, b]$ and $\beta \in [0, B]$) and $k_2 = \frac{3 - B - e^{1 - \gamma}}{b}$ (to the positive value of logarithm in the right hand-side of (\ref{psi4}) below).
Then for all $k > k_0 = \max\{k_1, k_2\}$ and all  $\alpha\in [a, b]$ and $\beta \in [0, B]$ we have
$$
\alpha k + \beta > a k + \beta > a k > 1,
$$
and thus it follows from (\ref{psi3}) that
\begin{equation}
\label{psi4}
\psi(\alpha k + \beta) \leq \ln (\alpha k + \beta + e^{1-\gamma} - 2) \leq \ln (b k + B + e^{1-\gamma} - 2),
\end{equation}
where $b k + B + e^{1-\gamma} - 2 > 1$ since $k > k_2$.

It follows from the inequality
\begin{equation}
\label{uniform1}
\left| a_k z^k\right| = \frac{|k \psi(\alpha k + \beta)|}{|\Gamma(\alpha k + \beta)|} |z|^k \leq \frac{|k \ln (b k + B + e^{1-\gamma} - 2)|}{|\Gamma(a k)|} |z|^k
\end{equation}
valid for for all $k \geq k_0$, and convergence of the positive series
\begin{equation}
\label{uniform2}
\sum\limits_{k=k_0}^{\infty} \frac{k \ln (b k + B + e^{1-\gamma} - 2)}{\Gamma(a k + \beta)} |z|^k \textcolor[rgb]{0.98,0.00,0.00}{\leq} \sum\limits_{k=k_0}^{\infty} \frac{\ln (b k + B + e^{1-\gamma} - 2)}{\ln (a k)} \frac{k \ln (a k)}{\Gamma(a k)} |z|^k.
\end{equation}
The latter series is converging since it follows from previous consideration that
the series
$$
\sum\limits_{k=k_0}^{\infty} \frac{k \ln (a k)}{\Gamma(a k)} |z|^k
$$
converges and the sequence
$$
\frac{\ln (b k + B + e^{1-\gamma} - 2)}{\ln (a k)}
$$
is monotone, positive and bounded for all $k > k_0$.
\end{proof}

In a similar way one can justify the differential formula for differentiation of the Mittag-Leffler function $E_{\alpha,\beta}(z)$ with respect to the second parameter $\beta > 0$:
\begin{equation}
\label{M-L_diff2}
\frac{\partial E_{\alpha,\beta}(z)}{\partial \beta} = - \sum\limits_{k=0}^{\infty} \frac{\psi(\alpha k + \beta) z^k}{\Gamma(\alpha k + \beta)}.
\end{equation}
Slight modification of the above approach allow us to show that formula (\ref{M-L_diff2}) is also valid for all complex values of the parameter $\beta$.
\begin{prop}
\label{Diff_ML}
The Mittag-Leffler function $E_{\alpha,\beta}(z)$ is differentiable (analytic) as a function of the variable $\beta\in {\mathbb C}$ for any fixed value of parameters ${\mathrm{Re}}\, \alpha > 0$, $z\in {\mathbb C}$ and formula  (\ref{M-L_diff2})
holds valid.
\end{prop}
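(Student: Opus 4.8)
The plan is to run the same Weierstrass–type argument as for the $\alpha$–differentiation, but now in the complex $\beta$–plane, and to conclude with the classical theorem that a series of holomorphic functions converging uniformly on compact subsets of a domain has a holomorphic sum which may be differentiated term by term. Fix $\alpha$ with ${\mathrm{Re}}\,\alpha>0$, fix $z\in{\mathbb C}$, and let $K=\{\beta\in{\mathbb C}:|\beta|\le B\}$ be an arbitrary closed disc. Write $g_k(\beta)=z^k/\Gamma(\alpha k+\beta)$. Since $1/\Gamma$ is entire, each $g_k$ is an entire function of $\beta$, and
\[
g_k'(\beta)=-\,\frac{\psi(\alpha k+\beta)\,z^k}{\Gamma(\alpha k+\beta)}=z^k\,\Bigl(\tfrac{1}{\Gamma}\Bigr)'(\alpha k+\beta)
\]
extends to an entire function of $\beta$ as well, the poles of $\psi$ at $\alpha k+\beta\in\{0,-1,-2,\dots\}$ being cancelled by the zeros of $1/\Gamma$ there; in particular the $k=0$ term of (\ref{M-L_diff2}) is the entire function $-(1/\Gamma)'(\beta)$, so no difficulty arises at $\beta=0,-1,-2,\dots$.

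Next I would show that $\sum_{k\ge 0}g_k'(\beta)$ converges uniformly on $K$ via the Weierstrass $M$–test. Choose $k_0=k_0(\alpha,B)$ so large that ${\mathrm{Re}}(\alpha k+\beta)\ge k\,{\mathrm{Re}}\,\alpha-B\ge 1$ for every $k\ge k_0$ and every $\beta\in K$; since ${\mathrm{Im}}(\alpha k+\beta)/{\mathrm{Re}}(\alpha k+\beta)\to\tan(\arg\alpha)$ and $|\arg\alpha|<\pi/2$, the points $w=\alpha k+\beta$ stay, for $k\ge k_0$, in a fixed sector $|\arg w|\le\frac{\pi}{2}-\delta$ with some $\delta>0$, uniformly in $\beta\in K$. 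On such a sector one has $\psi(w)=\log w+O(1/|w|)$ and the complex Stirling estimate
\[
|\Gamma(w)|=\sqrt{2\pi}\,|w|^{{\mathrm{Re}}\,w-1/2}\,e^{-{\mathrm{Re}}\,w-{\mathrm{Im}}\,w\,\arg w}\,[1+o(1)],\qquad |w|\to\infty,
\]
uniformly. Inserting $|w|\le|\alpha|k+B$, ${\mathrm{Re}}\,w=k\,{\mathrm{Re}}\,\alpha+{\mathrm{Re}}\,\beta$ and $|{\mathrm{Im}}\,w\,\arg w|\le Ck$, one obtains, with $C_0>0$ and implied constants depending only on $\alpha$ and $B$,
\[
|\psi(\alpha k+\beta)|=O(\log k),\qquad \frac{1}{|\Gamma(\alpha k+\beta)|}=O\!\left(\frac{1}{(C_0 k)^{({\mathrm{Re}}\,\alpha)k}}\right),\qquad k\to\infty,
\]
both uniformly in $\beta\in K$.

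Consequently $M_k:=\sup_{\beta\in K}|g_k'(\beta)|=O\bigl(\log k\,|z|^k(C_0 k)^{-({\mathrm{Re}}\,\alpha)k}\bigr)$, and since the factor $(C_0 k)^{-({\mathrm{Re}}\,\alpha)k}$ decays faster than any geometric sequence, $\sum_{k\ge 0}M_k<\infty$ for every fixed $z$; the analogous (easier) bound for $\sup_{\beta\in K}|g_k(\beta)|$ shows $\sum_{k\ge 0}g_k$ converges uniformly on $K$ as well. As $K$ was an arbitrary disc, both series converge uniformly on compact subsets of the whole $\beta$–plane, and each term is entire; hence $E_{\alpha,\cdot}(z)$ is entire in $\beta$ and $\partial_\beta E_{\alpha,\beta}(z)=\sum_{k\ge 0}g_k'(\beta)$, which is exactly (\ref{M-L_diff2}), valid for all $\beta\in{\mathbb C}$.

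I expect the main obstacle to be the uniform lower bound $1/|\Gamma(\alpha k+\beta)|=O\bigl((C_0 k)^{-({\mathrm{Re}}\,\alpha)k}\bigr)$: one must check that $\arg(\alpha k+\beta)$ genuinely remains in a sector on which the asymptotics of $\Gamma$ and $\psi$ hold uniformly, and it is precisely here that the hypothesis ${\mathrm{Re}}\,\alpha>0$ (i.e.\ $|\arg\alpha|<\pi/2$) is used; keeping track of the exponential factors $e^{-{\mathrm{Re}}\,w}$ and $e^{-{\mathrm{Im}}\,w\,\arg w}$ and absorbing them into the constant $C_0$ requires a little care. The only other point that needs a remark is that the finitely many terms with $k<k_0$ are harmless, because $1/\Gamma$ and $(1/\Gamma)'$ are entire, so no actual pole of $\psi$ or $\Gamma$ ever occurs in any term of (\ref{M-L1}) or (\ref{M-L_diff2}).
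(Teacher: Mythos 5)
Your proposal is correct and follows essentially the same route as the paper: a Weierstrass $M$-test for the term-by-term differentiated series on a compact parameter set, using the asymptotics $\psi(w)\sim\log w$ and the Stirling lower bound for $|\Gamma(\alpha k+\beta)|$ in a sector $|\arg w|<\pi/2-\delta$ guaranteed by ${\mathrm{Re}}\,\alpha>0$. Your explicit appeal to the Weierstrass theorem on locally uniform limits of holomorphic functions, and your remark that the poles of $\psi$ are cancelled by the zeros of $1/\Gamma$ (so every term of (\ref{M-L_diff2}) is entire in $\beta$), are minor but welcome tightenings of the paper's argument, which only treats indices $k$ large enough that no pole can occur.
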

\begin{proof}
In the case of complex values of $\beta$ it is impossible to use inequalities for psi-function of positive argument as (\ref{psi1}), (\ref{psi3}). Another constrain is that psi-function has zeroes (one positive belonging to the interval $[1, 2]$ and countable set of negative zeroes, see e.g. \cite{MezHof17}). Anyway, the proof of the Proposition can be obtained by using asymptotic formula
\begin{equation}
\label{psi5}
\psi(z) \sim \ln z - \frac{1}{z} \;\;\; \Leftrightarrow  \;\;\; \psi(z) = \left(\ln z - \frac{1}{z}\right) \left[1 + \delta(z)\right]
\end{equation}
for complex numbers $z$ with large modulus ($|z|\rightarrow \infty$) in the sector $|\arg z| < \pi - \varepsilon$ with some infinitesimally small positive constant $\varepsilon > 0$, and $\delta(z) = o(1)$ as $|z|\rightarrow \infty$. Note that for all fixed $\alpha > 0$, $\beta\in {\mathbb C}$ there exists a sufficiently large $k_1$ such that for $k > k_1$ the real part of the expression $\alpha k + \beta$ is greater than 2, i.e. ${\mathrm{Re}}\, (\alpha k + \beta) > 2, k > k_1$. Thus for all fixed $\alpha > 0$, $\beta\in {\mathbb C}$ and all  $k > k_1$ the function $\psi(\alpha k + \beta)$ has no zeroes and $\Gamma(\alpha k + \beta)$ has neither zeroes no poles.

Using asymptotic relations (\ref{psi5}) and (\ref{gamma2}) we can determine the radius $R$ of convergence of the series (\ref{M-L_diff2}), which has the form $\sum\limits_{k=1}^{\infty} b_k z^k$,
\begin{equation}
\label{radius2}
R = \lim_{k\rightarrow\infty} \frac{\mid b_k\mid}{\mid b_{k+1}\mid} = \lim_{k\rightarrow\infty} \frac{\mid\psi(\alpha k + \beta)\mid}{\mid\psi(\alpha (k+1) + \beta)\mid} \frac{\mid\Gamma(\alpha (k + 1) + \beta)\mid}{\mid\Gamma(\alpha k + \beta)\mid} =
\end{equation}
$$
\lim_{k\rightarrow\infty} \frac{\left| \left(\ln (\alpha k + \beta) - \frac{1}{\alpha k + \beta}\right) \left[1 + \delta(\alpha k + \beta)\right] \right|}{\left| \left(\ln (\alpha (k+1) + \beta) - \frac{1}{\alpha (k+1) + \beta}\right) \left[1 + \delta(\alpha k + \beta)\right]\right|} \times
$$
$$
|(\alpha k)^{\alpha}| \left| \left[1 + \frac{\alpha(\alpha - 1)}{2 \alpha k} + O\left(\frac{1}{(\alpha k)^2}\right)\right]\right | = \infty.
$$

Let us now  prove the differentiability (the analyticity) of the Mittag-Leffler function $E_{\alpha,\beta}(z)$ with respect to the complex  parameter $\beta$.  We take the domain of parameters in the form $$D := \left\{(\alpha, \beta)\in {\mathbb C}^2: {\mathrm{Re}}\, \alpha\in [a,b], {\mathrm{Im}}\, \alpha \leq A; |\beta| \leq B\right\},$$ where $a, b, A, B$ are arbitrary fixed positive numbers, $b > a$.

The coefficients of the series (\ref{M-L_diff2}) are equal
$$
b_k = - \frac{\psi(\alpha k + \beta)}{\Gamma(\alpha k + \beta)}.
$$
We get uniform estimate of the modulus $|b_k|$ for sufficiently large $k$ in the above described domain $D$ of parameters.

In the asymptotic formula (\ref{psi4}) the term $\delta(z)$ is small enough for sufficiently large $|z|$. It gives, in particular, that $\exists r_2 > 0$ such that $\forall z, |z| > r_2 \Rightarrow |\delta(z)| < 1$. If $(\alpha, \beta)\in D$  then
\begin{equation}
\label{estimate1}
|\alpha k + \beta| \geq |\alpha| k - |\beta| \geq \sqrt{{\mathrm{Re}}^2 \alpha + {\mathrm{Im}}^2 \alpha} k - |\beta| \geq |{\mathrm{Re}} \alpha| k - |\beta| \geq a k - B
\end{equation}
Taking $k_2 = \frac{B + r_2}{a}$ we obtain that for all $k > k_2$ and all $(\alpha, \beta)\in D$ we have $|\delta(\alpha k + \beta)| < 1$. Hence for all $k > k_2$ and all $(\alpha, \beta)\in D$ it follows
\begin{equation}
\label{psi5}
|\psi(\alpha k + \beta)| \leq \left[|\ln (\alpha k + \beta)| + \frac{1}{|\alpha k + \beta|}\right] (1 + |\delta(\alpha k + \beta)|) \leq
\end{equation}
$$
2 \left(\sqrt{\ln^2 (\sqrt{b^2 + A^2} k + B) + \pi^2} +\frac{1}{a k - B}\right).
$$

From the other side it follows from (\ref{gamma1}) that
$$
\Gamma(\alpha k + \beta) = \sqrt{2\pi} (\alpha k)^{\alpha k + \beta - 1/2} e^{-\alpha k} [1 + \omega(\alpha k + \beta)],
$$
where $\omega(z)$ is sufficiently small for large enough $|z|$.  It gives, in particular, that there exists $r_3 > 0$ such that $\forall z, |z| > r_3 \Rightarrow |\omega(z)| < 1/2$. Then we can find $k_3 = \frac{B + r_3}{a}$. Thus for all $k > k_3$ and all $(\alpha, \beta)\in D$ we have $|\omega(\alpha k + \beta)| < 1/2$. Hence for all $k > k_3$ and all $(\alpha, \beta)\in D$ it follows
\begin{equation}
\label{gamma3}
|\Gamma(\alpha k + \beta)| = \sqrt{2\pi} \left|(\alpha k)^{\alpha k + \beta - 1/2}\right| \left|e^{-\alpha k}\right| \left|[1 + \omega(\alpha k + \beta)]\right| \geq
\end{equation}
$$
 \sqrt{\pi/2} (a k)^{ak - B -1/2} e^{- \pi (A + B)} \cdot e^{- b k}.
$$
The estimates (\ref{psi5}), (\ref{gamma3}) give that for all $k > k_0 = \max \{k_1, k_2, k_3 \}$ and for all $(\alpha, \beta)\in D$ and any fixed $z$ the following inequality holds
\begin{equation}
\label{estimate_fin}
\left|b_k z^k\right| \leq \frac{2 \left(\sqrt{\ln^2 (\sqrt{b^2 + A^2} k + B) + \pi^2} +\frac{1}{a k - B}\right)}{\sqrt{\pi/2} (a k)^{ak - B -1/2} e^{- \pi (A + B)} \cdot e^{- b k}} |z|^k.
\end{equation}
Therefore the uniform convergence of the series $\sum\limits_{k=1}^{\infty} b_k z^k$ holds true since the positive series
$$
\sum\limits_{k=k_0}^{\infty} \frac{2 \left(\sqrt{\ln^2 (\sqrt{b^2 + A^2} k + B) + \pi^2} +\frac{1}{a k - B}\right)}{\sqrt{\pi/2} (a k)^{ak - B -1/2} e^{- \pi (A + B)} \cdot e^{- b k}} |z|^k
$$
converges. The later follows from the fact that the expression in the numerator behaves as $C_1 \ln k$, and that in the denominator behaves as $C_2 k^{a k}$, where $C_1, C_2$ are certain constants.
Since the constants $a, b, A, B$ are chosen arbitrary, it gives differentiability (analyticity) of the function $E_{\alpha,\beta}(z)$ with respect to $\beta\in {\mathbb C}$ for arbitrary $\alpha, {\mathrm{Re}}\, \alpha > 0$.
\end{proof}
In a similar way we can derive the differentiability results for functions of the Mittag-Leffler type depending on several parameters.
Let us formulate this result in the case 4-parametric Mittag-Leffler function (see \cite{RogKor10}, \cite[Sec. 6.1]{GKMR})
\begin{equation}
\label{ML4par}
E_{\alpha_1,\beta_1;\alpha_2,\beta_2}(z) = \sum\limits_{k=0}^{\infty} \frac{z^k}{\Gamma(\alpha_1 k + \beta_1) \Gamma(\alpha_2 k + \beta_2)},
\end{equation}
which is defined and analytic for all $z\in {\mathbb C}$ whenever ${\mathrm{Re}}\, (\alpha_1 + \alpha_2) > 0; \beta_1\in {\mathbb C}, \beta_2\in {\mathbb C}$.
\begin{cor}
\label{4par}
The 4-parametric Mittag-Leffler function (\ref{ML4par}) is differentiable with respect to parameters $\alpha_1,\beta_1;\alpha_2,\beta_2$ in the domain
$$D_4 := \left\{(\alpha_1,\beta_1,\alpha_2\beta_2): {\mathrm{Re}}\, (\alpha_1 + \alpha_2) > 0; \beta_1\in {\mathbb C}, \beta_2\in {\mathbb C}\right\}$$
and the following formulas are satisfied
\begin{equation}
\label{ML4par_diff1}
\frac{\partial E_{\alpha_1,\beta_1;\alpha_2\beta_2}(z)}{\partial \alpha_j} = - \sum\limits_{k=1}^{\infty} \frac{k \psi(\alpha_j k + \beta_j)}{\Gamma(\alpha_1 k + \beta_1) \Gamma(\alpha_2 k + \beta_2)} z^k, \;\;\; j = 1, 2,
\end{equation}
\begin{equation}
\label{ML4par_diff2}
\frac{\partial E_{\alpha_1,\beta_1;\alpha_2,\beta_2}(z)}{\partial \beta_j} = - \sum\limits_{k=0}^{\infty} \frac{\psi(\alpha_j k + \beta_j)}{\Gamma(\alpha_1 k + \beta_1) \Gamma(\alpha_2 k + \beta_2)} z^k, \;\;\; j = 1, 2.
\end{equation}
\end{cor}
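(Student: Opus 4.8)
The plan is to run, \emph{mutatis mutandis}, the argument used in the proofs of Proposition 2.1 and Proposition 2.2, the only change being that each denominator now carries two Gamma factors. I would first record that term-by-term differentiation of (\ref{ML4par}) produces (\ref{ML4par_diff1}) and (\ref{ML4par_diff2}) formally, since $\partial_{\alpha_j}\Gamma(\alpha_j k+\beta_j)^{-1}=-k\,\psi(\alpha_j k+\beta_j)/\Gamma(\alpha_j k+\beta_j)$ and $\partial_{\beta_j}\Gamma(\alpha_j k+\beta_j)^{-1}=-\psi(\alpha_j k+\beta_j)/\Gamma(\alpha_j k+\beta_j)$, while the remaining factor is untouched. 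Because $1/\Gamma$ is entire and $(1/\Gamma)'=-\psi/\Gamma$, the quotient $\psi/\Gamma$ is itself entire; hence every coefficient in (\ref{ML4par_diff1})--(\ref{ML4par_diff2}) is an entire function of $(\alpha_1,\beta_1,\alpha_2,\beta_2)$, and the right-hand sides make sense throughout $D_4$, including the parameter values at which an individual $\Gamma$ or $\psi$ has a pole. It then remains only to prove that, for each fixed $z\in\mathbb C$, the two series converge uniformly on every compact subset of $D_4$; the differentiability --- and, in the variables admitted complex, the analyticity --- of $E_{\alpha_1,\beta_1;\alpha_2,\beta_2}(z)$ will follow from the term-by-term differentiation theorem recalled in the Introduction together with the Weierstrass theorem on uniformly convergent analytic functions.

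Next I would fix a compact set $K\subset D_4$, on which $|\alpha_j|\le A$, $|\beta_j|\le B$ and $\mathrm{Re}\,(\alpha_1+\alpha_2)\ge\delta>0$ for suitable constants, and verify the Weierstrass $M$-test. The digamma factor is controlled as in Proposition 2.2: from $\psi(w)\sim\ln w$ one gets $|\psi(\alpha_j k+\beta_j)|\le C_1\ln k$ uniformly on $K$ for large $k$, the zeros and poles of $\psi$ causing no difficulty once $\psi$ is paired with $1/\Gamma$. For the product $c_k:=\bigl(\Gamma(\alpha_1 k+\beta_1)\Gamma(\alpha_2 k+\beta_2)\bigr)^{-1}$ I would use the Stirling-type formula (\ref{gamma1}) on each factor whose argument has eventually positive real part, and, when $\mathrm{Re}\,\alpha_j\le 0$, pass through the reflection formula $\Gamma(w)\Gamma(1-w)=\pi/\sin\pi w$: this replaces $|1/\Gamma(\alpha_j k+\beta_j)|$ by $|\Gamma(1-\alpha_j k-\beta_j)|$, a Gamma value of large positive real part to which Stirling applies, at the cost of a factor $|\sin\pi(\alpha_j k+\beta_j)|\le e^{\pi|\mathrm{Im}\,(\alpha_j k+\beta_j)|}$ that contributes only an $O(k)$ term to the logarithm. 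The outcome I expect is, uniformly on $K$ for $k$ large, $\ln|c_k|\le-\mathrm{Re}\,(\alpha_1+\alpha_2)\,k\ln k+C_2 k\le-\delta\,k\ln k+C_2 k$, whence the general term of either series is dominated by
\[
M_k:=C_1\,k\ln k\cdot k^{-\delta k}\,e^{C_2 k}\,|z|^k ,
\]
and $\sum_k M_k<\infty$ because $k^{-\delta k}$ overwhelms the geometric and polynomial--logarithmic factors. (For (\ref{ML4par_diff2}) the factor $k$ is absent.) The $M$-test then gives the required uniform convergence on $K$, and since $K$ was an arbitrary compact subset of $D_4$ the Corollary follows.

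The one genuinely new point, and the step I expect to be the real obstacle, is this last estimate on $|c_k|$. Because $D_4$ constrains only the sum $\mathrm{Re}\,(\alpha_1+\alpha_2)$, on a compact subset one of the exponents $\alpha_j$ may have non-positive (or vanishing) real part, so that the argument $\alpha_j k+\beta_j$ is driven toward the negative real axis and the poles of $\Gamma$, where the plain Stirling expansion used in the two-parameter case is unavailable; the reflection formula repairs this and still delivers the leading $-\mathrm{Re}\,\alpha_j\,k\ln k$ behaviour of $\ln|1/\Gamma(\alpha_j k+\beta_j)|$, and it is precisely the hypothesis $\mathrm{Re}\,(\alpha_1+\alpha_2)>0$ that forces the product of the two reciprocal Gamma factors to decay like $k^{-\mathrm{Re}\,(\alpha_1+\alpha_2)k}$ and hence $M_k$ to be summable. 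Degenerate parameter choices for which $\alpha_j k+\beta_j\equiv 0$ in $k$ are harmless, since then the corresponding factor $1/\Gamma$ vanishes identically. Once the uniform convergence is established the formulas (\ref{ML4par_diff1})--(\ref{ML4par_diff2}) hold and $E_{\alpha_1,\beta_1;\alpha_2,\beta_2}(z)$ is differentiable throughout $D_4$, as claimed.
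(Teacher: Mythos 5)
Your proposal is correct and, in fact, more complete than what the paper supplies: the paper states this corollary with no proof at all, merely asserting that it follows ``in a similar way'' from Propositions 2.1--2.2, both of which assume the relevant $\mathrm{Re}\,\alpha>0$ so that Stirling's formula applies directly to $\Gamma(\alpha k+\beta)$. You correctly isolate the one genuinely new issue --- that $D_4$ constrains only $\mathrm{Re}\,(\alpha_1+\alpha_2)$, so one of the arguments $\alpha_j k+\beta_j$ may drift toward the negative real axis --- and your remedy (reflection formula, $|\sin\pi w|\le e^{\pi|\mathrm{Im}\,w|}$ costing only $O(k)$ in the logarithm, with the hypothesis $\mathrm{Re}\,(\alpha_1+\alpha_2)>0$ delivering the decisive $k^{-\delta k}$ decay of the product of reciprocal Gammas) is exactly what is needed and is the content the paper's ``similar way'' silently omits.

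Two points deserve tightening. First, the intermediate claim that $|\psi(\alpha_j k+\beta_j)|\le C_1\ln k$ uniformly on $K$ is false as stated when $\mathrm{Re}\,\alpha_j\le 0$: the asymptotics $\psi(w)\sim\ln w$ hold only in a sector $|\arg w|\le\pi-\varepsilon$, and $\psi$ has poles at the non-positive integers which $\alpha_j k+\beta_j$ may approach arbitrarily closely. Your parenthetical remark that the poles are harmless ``once $\psi$ is paired with $1/\Gamma$'' is the correct fix, but it should be the actual estimate: bound the entire function $\psi(w)/\Gamma(w)=-(1/\Gamma)'(w)$ directly, e.g.\ by differentiating $1/\Gamma(w)=\pi^{-1}\sin(\pi w)\,\Gamma(1-w)$ to get $|(1/\Gamma)'(w)|\le |\Gamma(1-w)|\,e^{\pi|\mathrm{Im}\,w|}\bigl(1+C\ln|w|\bigr)$ for $\mathrm{Re}\,w$ large and negative, rather than bounding $\psi$ and $1/\Gamma$ separately. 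Second, on a compact $K$ a single $\alpha_j$ may range over values with positive, zero and negative real part, so the case split (direct Stirling versus reflection) must be made uniform; this is routine (use a global bound on $1/\Gamma$ of the form $|1/\Gamma(w)|\le C|w|^{1/2-\mathrm{Re}\,w}e^{\mathrm{Re}\,w+\pi|\mathrm{Im}\,w|}$, or a finite subcover) but worth a sentence. With these repairs your argument is sound and yields the corollary on all of $D_4$.
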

The Wright function (see \cite[Ch. 7]{GKMR}, \cite{Luc19})
\begin{equation}
\label{Wright}
W_{\alpha,\beta}(z) = \sum\limits_{k=0}^{\infty} \frac{z^k}{k! \Gamma(\alpha k + \beta)} = \sum\limits_{k=0}^{\infty} \frac{z^k}{\Gamma(k + 1) \Gamma(\alpha k + \beta)}
\end{equation}
can be considered as a special case of the 4-parametric Mittag-Leffler function $W_{\alpha,\beta}(z) = E_{\alpha,\beta;1,1}(z)$. The corresponding result has the form
\begin{cor}
\label{Wright1}
The Wright function (\ref{Wright}) is differentiable with respect to parameters $\alpha,\beta$ for all $\alpha, {\mathrm{Re}}\, \alpha > - 1$, $\beta\in {\mathbb C}$
and the following formulas are satisfied
\begin{equation}
\label{Wright_diff1}
\frac{\partial W_{\alpha,\beta}(z)}{\partial \alpha} = - \sum\limits_{k=1}^{\infty} \frac{k \psi(\alpha k + \beta)}{k! \Gamma(\alpha k + \beta)} z^k,
\end{equation}
\begin{equation}
\label{Wright_diff2}
\frac{\partial W_{\alpha,\beta}(z)}{\partial \beta} = - \sum\limits_{k=0}^{\infty} \frac{\psi(\alpha k + \beta)}{k! \Gamma(\alpha k + \beta)} z^k.
\end{equation}
\end{cor}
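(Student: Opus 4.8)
The plan is to read off the statement from Corollary~\ref{4par} through the identity $W_{\alpha,\beta}(z)=E_{\alpha,\beta;1,1}(z)$. Setting $(\alpha_1,\beta_1,\alpha_2,\beta_2)=(\alpha,\beta,1,1)$, the condition ${\mathrm{Re}}\,(\alpha_1+\alpha_2)>0$ that defines $D_4$ reduces precisely to ${\mathrm{Re}}\,\alpha>-1$, the announced range. Differentiating (\ref{Wright}) with respect to $\alpha$ and $\beta$ means, in the notation of (\ref{ML4par_diff1})--(\ref{ML4par_diff2}), differentiating with respect to the first pair $(\alpha_1,\beta_1)$ only; since $\Gamma(\alpha_2 k+\beta_2)=\Gamma(k+1)=k!$ carries no $\alpha$- or $\beta$-dependence, (\ref{ML4par_diff1}) and (\ref{ML4par_diff2}) with $j=1$ become verbatim (\ref{Wright_diff1}) and (\ref{Wright_diff2}) --- in particular no $\psi(k+1)$ factor is produced, because the $k!$ is never differentiated. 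Hence, granted Corollary~\ref{4par}, we are done.

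The one feature not already contained in the Propositions of Section~\ref{M-L} is the enlarged range $-1<{\mathrm{Re}}\,\alpha\le 0$, and checking that the machinery behind Corollary~\ref{4par} does reach it is where the work lies. The obstruction: now ${\mathrm{Re}}\,(\alpha k+\beta)=k\,{\mathrm{Re}}\,\alpha+{\mathrm{Re}}\,\beta\to-\infty$, so the arguments of $\psi$ and $\Gamma$ eventually leave every sector $|\arg w|<\pi-\varepsilon$ --- the positive-argument bounds (\ref{psi3})--(\ref{psi4}) and the sectorial asymptotics used above fail --- and for real $\alpha,\beta$ the points $\alpha k+\beta$ even run onto the negative real axis. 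The remedy is to pass to the reflection formulas
\[
\frac{1}{\Gamma(\alpha k+\beta)}=\frac{\sin\pi(\alpha k+\beta)}{\pi}\,\Gamma(1-\alpha k-\beta), \qquad \psi(\alpha k+\beta)=\psi(1-\alpha k-\beta)-\pi\cot\pi(\alpha k+\beta)
\]
(valid for $\alpha k+\beta$ off the integers), which move every Gamma/digamma evaluation to the point $1-\alpha k-\beta$, whose real part $1-k\,{\mathrm{Re}}\,\alpha-{\mathrm{Re}}\,\beta$ tends to $+\infty$, i.e. into the region where (\ref{gamma1}) and $\psi(w)\sim\ln w-1/w$ apply; the trigonometric prefactors $\sin\pi(\alpha k+\beta)$, $\cot\pi(\alpha k+\beta)$ are $O(e^{\pi|{\mathrm{Im}}(\alpha k+\beta)|})=O(C^{k})$ uniformly on a compact parameter set, after discarding the finitely many $k$ with $\alpha k+\beta$ too near an integer (harmless for a convergence statement).

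With this substitution, on any compact $K\subset\{{\mathrm{Re}}\,\alpha>-1\}$, so that ${\mathrm{Re}}\,\alpha\ge-1+\delta$ on $K$ for some $\delta>0$, and for fixed $z$, the $k$-th term of each of (\ref{Wright_diff1}), (\ref{Wright_diff2}) is dominated by a constant times
\[
\frac{(\ln k)\,|\Gamma(1-\alpha k-\beta)|}{k!}\,C^{k}|z|^{k}=O\!\left(k^{-k(1+{\mathrm{Re}}\,\alpha)}(\ln k)\,C_1^{k}\right),
\]
which is summable exactly because $1+{\mathrm{Re}}\,\alpha\ge\delta>0$ makes $k^{-k(1+{\mathrm{Re}}\,\alpha)}$ overpower every geometric factor --- the same mechanism that makes $W_{\alpha,\beta}$ entire for $\alpha>-1$. (Where ${\mathrm{Re}}\,\alpha>0$ on $K$ no reflection is needed: there $\Gamma(\alpha k+\beta)$ is large by (\ref{gamma1}) and one repeats the estimate of the complex-$\beta$ Proposition of Section~\ref{M-L}.) A Weierstrass $M$-test then gives uniform convergence of (\ref{Wright_diff1}), (\ref{Wright_diff2}) on $K$, and the termwise-differentiation theorem recalled in Section~\ref{Intro} yields the formulas. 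I expect the reflection-formula bookkeeping for $-1<{\mathrm{Re}}\,\alpha\le0$ --- keeping the $\sin$ and $\cot$ factors and the near-integer exceptional indices under uniform control --- to be the only delicate step; the rest transcribes the estimates already obtained for $E_{\alpha,\beta}$.
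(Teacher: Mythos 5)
Your reduction to Corollary~\ref{4par} via $W_{\alpha,\beta}=E_{\alpha,\beta;1,1}$ is exactly what the paper does --- and that is literally all the paper does: Corollary~\ref{Wright1} is stated with no proof beyond the remark that the Wright function is the special case $(\alpha_2,\beta_2)=(1,1)$, and Corollary~\ref{4par} itself is only asserted ``in a similar way'' to the propositions of Section~\ref{M-L}, all of which assume ${\mathrm{Re}}\,\alpha>0$. You have correctly spotted that this leaves the strip $-1<{\mathrm{Re}}\,\alpha\le 0$ genuinely unjustified (there ${\mathrm{Re}}(\alpha k+\beta)\to-\infty$ and every estimate of Section~\ref{M-L} breaks down), and your reflection-formula argument is the right mechanism to close that gap; it buys an actual proof on the full announced range, which the paper does not supply.

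One step of your bookkeeping is not correct as written. You bound $\cot\pi(\alpha k+\beta)$ by $O(C^k)$ ``after discarding the finitely many $k$ with $\alpha k+\beta$ too near an integer.'' For real $\beta$ and irrational real $\alpha\in(-1,0)$ the sequence $\alpha k+\beta$ comes arbitrarily close to negative integers for \emph{infinitely} many $k$, so there is no uniform lower bound on the distance to $\mathbb{Z}$ and the cotangent is not controlled this way. The repair is to never separate the two reflection formulas: combining them,
\begin{equation*}
\frac{\psi(w)}{\Gamma(w)}
=\frac{\Gamma(1-w)}{\pi}\Bigl[\sin(\pi w)\,\psi(1-w)-\pi\cos(\pi w)\Bigr],\qquad w=\alpha k+\beta,
\end{equation*}
so the only trigonometric factors that survive are $\sin(\pi w)$ and $\cos(\pi w)$, both $O(e^{\pi|{\mathrm{Im}}\,w|})=O(C^k)$ on a compact parameter set with no exceptional indices (consistent with the fact that $\psi/\Gamma=\Gamma'/\Gamma^2$ is entire, the poles of $\psi$ being cancelled by the zeros of $1/\Gamma$). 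With that substitution your dominating term $k^{-k(1+{\mathrm{Re}}\,\alpha)}(\ln k)\,C_1^{k}|z|^k$ and the $M$-test argument go through, and the proof is complete.
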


In a similar way we can obtain and justify the formulas for derivatives with respect to parameters of the three parametric Mittag-Leffler function (or Prabhakar function) (see \cite{GarGar18}, \cite[Ch. 5]{GKMR})
\begin{equation}
\label{ML3par}
E_{\alpha,\beta}^{\gamma}(z) = \sum\limits_{k=0}^{\infty} \frac{(\gamma)_k z^k}{k! \Gamma(\alpha k + \beta)} = \sum\limits_{k=0}^{\infty} \frac{\Gamma(\gamma + k) z^k}{\Gamma(\gamma) k! \Gamma(\alpha k + \beta)},
\end{equation}
which is defined and analytic for all $z\in {\mathbb C}$ whenever ${\mathrm{Re}}\, (\alpha) > 0; \beta\in {\mathbb C}, \gamma\in {\mathbb C}, \gamma \not= -1, -2, -3, \ldots$.
A little bit more cumbersome is the differentiation with respect to the parameter $\gamma$ since
$$
\frac{\partial}{\partial \gamma} \left[\frac{\Gamma(\gamma + k)}{\Gamma(\gamma)}\right] = \frac{\Gamma^{\prime}(\gamma + k) \Gamma(\gamma) - \Gamma^{\prime}(\gamma) \Gamma(\gamma + k)}{[\Gamma(\gamma)]^2} =
$$
$$
\left[\frac{\Gamma(\gamma + k)}{\Gamma(\gamma)}\right] \left\{\psi(\gamma + k) - \psi(\gamma)\right\}.
$$
\begin{cor}
\label{3par}
The 3-parametric Mittag-Leffler (Prabhakar) function (\ref{ML3par}) is differentiable with respect to parameters $\alpha,\beta,\gamma$ for all $\alpha, {\mathrm{Re}}\, \alpha > 0$, $\beta,\gamma\in {\mathbb C}, \gamma \not= -1, -2, -3, \ldots$
and the following formulas are satisfied
\begin{equation}
\label{ML3par_diff1}
\frac{\partial E_{\alpha,\beta}^{\gamma}(z)}{\partial \alpha} = - \sum\limits_{k=1}^{\infty} \frac{k \psi(\alpha k + \beta) \Gamma(\gamma + k)}{k! \Gamma(\gamma) \Gamma(\alpha k + \beta)} z^k,
\end{equation}
\begin{equation}
\label{ML3par_diff2}
\frac{\partial E_{\alpha,\beta}^{\gamma}(z)}{\partial \beta} = - \sum\limits_{k=0}^{\infty} \frac{\psi(\alpha k + \beta) \Gamma(\gamma + k)}{k! \Gamma(\gamma) \Gamma(\alpha k + \beta)} z^k,
\end{equation}
\begin{equation}
\label{ML3par_diff3}
\frac{\partial E_{\alpha,\beta}^{\gamma}(z)}{\partial \gamma} = \sum\limits_{k=0}^{\infty} \frac{\psi(\gamma + k) \Gamma(\gamma + k)}{k! \Gamma(\gamma) \Gamma(\alpha k + \beta)} z^k - \psi(\gamma) \sum\limits_{k=0}^{\infty} \frac{\Gamma(\gamma + k) z^k}{\Gamma(\gamma) k! \Gamma(\alpha k + \beta)}.
\end{equation}
\end{cor}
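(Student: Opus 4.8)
The plan is to repeat, for the series $(\ref{ML3par_diff1})$--$(\ref{ML3par_diff3})$, the scheme already used above for the two-parameter function $E_{\alpha,\beta}(z)$: view each of them as a power series in $z$ and show that it converges uniformly on every compact box of parameters, so that the uniform-convergence differentiation theorem recalled in Section $\ref{Intro}$ applies. First I would fix arbitrary constants $0<a<b$, $A,B>0$ and an arbitrary compact set $K\subset\mathbb{C}$ with $K\cap\{0,-1,-2,\dots\}=\emptyset$, and set
$$D_3 := \left\{(\alpha,\beta,\gamma):\ \mathrm{Re}\,\alpha\in[a,b],\ |\mathrm{Im}\,\alpha|\le A,\ |\beta|\le B,\ \gamma\in K\right\}.$$
On $D_3$ one has $\inf_{\gamma\in K}|\Gamma(\gamma)|>0$, the quantity $\sigma:=\sup_{\gamma\in K}\mathrm{Re}\,\gamma$ is finite, and for every $k$ the point $\gamma+k$ avoids the poles of $\Gamma$ and $\psi$; moreover there is a $k_0$ such that for $k>k_0$ the numbers $\alpha k+\beta$ and $\gamma+k$ have real part large enough for the asymptotic relations $(\ref{gamma1})$ and $(\ref{psi5})$ to be used uniformly on $D_3$.

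Next I would assemble the majorant. The only new feature compared with $E_{\alpha,\beta}(z)$ is the extra factor $\Gamma(\gamma+k)/(k!\,\Gamma(\gamma))$; applying $(\ref{gamma1})$ to numerator and denominator of $\Gamma(\gamma+k)/\Gamma(k+1)$ yields the uniform polynomial bound $|\Gamma(\gamma+k)|/(k!\,|\Gamma(\gamma)|)\le C_3\,k^{\sigma-1}$ for $(\alpha,\beta,\gamma)\in D_3$, $k>k_0$. Combining this with the lower bound for $|\Gamma(\alpha k+\beta)|$ obtained exactly as in $(\ref{gamma3})$ and with the logarithmic upper bounds $|\psi(\alpha k+\beta)|\le C_2\ln k$ and $|\psi(\gamma+k)|\le C_2'\ln k$ coming from $(\ref{psi5})$, the general term of each of $(\ref{ML3par_diff1})$, $(\ref{ML3par_diff2})$ and of the first series in $(\ref{ML3par_diff3})$ is dominated on $D_3$, for $k>k_0$, by
$$M_k \;=\; C\,\frac{k^{\sigma}\,\ln k}{(ak)^{ak-B-1/2}\,e^{-bk}}\,|z|^{k},$$
while the finitely many terms with $k\le k_0$ are continuous, hence uniformly bounded, on the compact parameter set. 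The series $\sum_k M_k$ converges for every fixed $z$ because the factor $(ak)^{ak}$ in the denominator overwhelms any power of $k$, any $\ln k$ and any $|z|^k$ — this is the same computation that produced $R=\infty$ in $(\ref{radius1})$--$(\ref{radius2})$. By the Weierstrass $M$-test the three differentiated series converge uniformly in $(\alpha,\beta,\gamma)\in D_3$, and since their partial sums are analytic in each parameter and $D_3$ is an arbitrary compact box inside the admissible domain, the differentiation theorem of Section $\ref{Intro}$ gives $(\ref{ML3par_diff1})$ and $(\ref{ML3par_diff2})$.

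For the derivative in $\gamma$ I would use the identity displayed just before the statement, $\partial_\gamma[\Gamma(\gamma+k)/\Gamma(\gamma)]=[\Gamma(\gamma+k)/\Gamma(\gamma)]\{\psi(\gamma+k)-\psi(\gamma)\}$, which splits the term-by-term derivative into exactly the two sums on the right of $(\ref{ML3par_diff3})$: the first sum is controlled by the majorant $M_k$ above, and the second is $-\psi(\gamma)\,E_{\alpha,\beta}^{\gamma}(z)$, a uniformly convergent series on $D_3$ since $E_{\alpha,\beta}^{\gamma}$ is entire in $z$ and $\psi(\gamma)$ is bounded on $K$. One more application of the differentiation theorem then yields $(\ref{ML3par_diff3})$.

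I expect the only point requiring genuine care to be the uniformity of the estimates in $\gamma$ as $\gamma$ ranges over $K$: one must verify that the $O(1/k)$ correction in the Stirling estimate for $\Gamma(\gamma+k)/\Gamma(k+1)$ is uniform over the compact set $K$ and that $|\Gamma(\gamma)|$ stays bounded below there. Both are immediate once $K$ is chosen to avoid the poles of $\Gamma$, and they are precisely what makes the open $\gamma$-domain of the statement (rather than a closed subset) admissible; everything else is a verbatim rerun of the estimates used above for $E_{\alpha,\beta}(z)$.
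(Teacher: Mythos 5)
Your proposal is correct and follows essentially the same route as the paper, which does not give a separate proof of this corollary but only remarks that the series are handled as in the propositions for $E_{\alpha,\beta}(z)$, that the $\gamma$-derivative rests on the identity $\partial_\gamma[\Gamma(\gamma+k)/\Gamma(\gamma)]=[\Gamma(\gamma+k)/\Gamma(\gamma)]\{\psi(\gamma+k)-\psi(\gamma)\}$, and that the second sum in the $\gamma$-formula is $-\psi(\gamma)E_{\alpha,\beta}^{\gamma}(z)$. Your write-up in fact supplies more detail than the paper, in particular the uniform bound $|\Gamma(\gamma+k)|/(k!\,|\Gamma(\gamma)|)\le C_3 k^{\sigma-1}$ on a compact set of admissible $\gamma$, which is exactly the missing ingredient.
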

Both series in the right-hand side of (\ref{ML3par_diff3}) converge uniformly in the corresponding domains. The first one can be treated in the way similar to that  in Proposition \ref{Diff_ML}, but the second series coincides up to the constant multiplier with the initial Prabhakar function.

One more non-standard way of the differentiation is that for the so called Le Roy type function
\begin{equation}
\label{LeRoy1}
F_{\alpha,\beta}^{(\gamma)}(z) = \sum\limits_{k=0}^{\infty}  \frac{z^k}{[\Gamma(\alpha k + \beta)]^{\gamma}},
\end{equation}
which is known (see, e.g. \cite{GaRoMa17}, \cite[Sec. 5.3]{GKMR}) to be an entire function of variable $z$ whenever $\alpha, {\mathrm{Re}}\, \alpha > 0$, $\beta\in {\mathbb C}$, $\gamma > 0$.

The main novelty gives the derivative with respect to the third parameter $\gamma$ since
$$
\frac{\partial}{\partial \gamma} \left[\Gamma(\alpha k + \beta)\right]^{-\gamma} =
- \left[\Gamma(\alpha k + \beta)\right]^{-\gamma} \ln\, (\Gamma(\alpha k + \beta))  =
- \frac{\ln\, (\Gamma(\alpha k + \beta))}{\left[\Gamma(\alpha k + \beta)\right]^{\gamma}}.
$$
Anyway the series with this kind of derivative of the coefficient can be handled by using the same argument as before since numerator of this fraction
behaves as $C_1 k \ln\, k$ as $k \rightarrow \infty$, but denominator behaves as $C_2 k^{({\mathrm{Re}}\, \alpha) \gamma k}$  as $k \rightarrow \infty$, where $C_1, C_2$ are certain constants (see, e.g. \cite[Sec. 5.3]{GKMR}). Therefore the following results hold
\begin{cor}
\label{3par}
The Le Roy type function (\ref{LeRoy1}) is differentiable with respect to parameters $\alpha,\beta,\gamma$ for all $\alpha, {\mathrm{Re}}\, \alpha > 0$, $\beta\in {\mathbb C}, \gamma > 0$
and the following formulas are satisfied
\begin{equation}
\label{Le Roy_diff1}
\frac{\partial F_{\alpha,\beta}^{(\gamma)}(z)}{\partial \alpha} = - \gamma \sum\limits_{k=1}^{\infty} \frac{k \psi(\alpha k + \beta)}
{\left[\Gamma(\alpha k + \beta)\right]^{\gamma}} z^k,
\end{equation}
\begin{equation}
\label{Le Roy_diff2}
\frac{\partial F_{\alpha,\beta}^{(\gamma)}(z)}{\partial \beta} = - \gamma \sum\limits_{k=1}^{\infty} \frac{\psi(\alpha k + \beta)}
{\left[\Gamma(\alpha k + \beta)\right]^{\gamma}} z^k,
\end{equation}
\begin{equation}
\label{Le Roy_diff3}
\frac{\partial F_{\alpha,\beta}^{(\gamma)}(z)}{\partial \gamma} = - \sum\limits_{k=1}^{\infty} \frac{\ln\, \Gamma(\alpha k + \beta)}
{\left[\Gamma(\alpha k + \beta)\right]^{\gamma}} z^k.
\end{equation}
\end{cor}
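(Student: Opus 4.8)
The plan is to repeat, for each of the three series on the right-hand sides of (\ref{Le Roy_diff1})--(\ref{Le Roy_diff3}), the uniform-convergence argument already carried out for $E_{\alpha,\beta}(z)$ in the proof of Proposition~\ref{Diff_ML}, and then to invoke the term-by-term differentiation principle recalled in Section~\ref{Intro}, together with the Weierstrass theorem on uniform limits of analytic functions. Differentiability being a local property, and the parameters $(\alpha,\beta)$ ranging over the open set $\{{\mathrm{Re}}\,\alpha>0\}\times\mathbb C$ while $\gamma$ ranges over $(0,+\infty)$, it is enough to establish uniform convergence of each differentiated series on an arbitrary compact box
$$
D_{\mathrm{LR}}:=\bigl\{(\alpha,\beta,\gamma):\ {\mathrm{Re}}\,\alpha\in[a,b],\ |{\mathrm{Im}}\,\alpha|\le A,\ |\beta|\le B,\ \gamma\in[c,C]\bigr\},
$$
with $0<a<b$, $A,B>0$, $0<c<C$ fixed but arbitrary. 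As in the proof of Proposition~\ref{Diff_ML}, only the tail $k>k_0$ matters, where $k_0=k_0(a,b,A,B)$ is chosen so large that ${\mathrm{Re}}\,(\alpha k+\beta)>1$ for all $(\alpha,\beta)\in D_{\mathrm{LR}}$; then $\Gamma(\alpha k+\beta)$ is free of zeros and poles, $|\Gamma(\alpha k+\beta)|\ge 1$, and the point $\alpha k+\beta$ has positive real part so that $|\arg(\alpha k+\beta)|<\pi/2$. The finitely many initial terms are analytic in $\alpha,\beta$ and differentiable in $\gamma$ and do not affect the convergence question.

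First I would record the uniform lower bound for the denominators. From the Stirling-type formula (\ref{gamma1}), exactly as in the derivation of (\ref{gamma3}), one obtains for $k>k_0$ and $(\alpha,\beta)\in D_{\mathrm{LR}}$ a bound $|\Gamma(\alpha k+\beta)|\ge G_k$ with $G_k:=\sqrt{\pi/2}\,(ak)^{ak-B-1/2}e^{-\pi(A+B)}e^{-bk}$; since $G_k\ge 1$ for large $k$ and $\gamma\ge c$, this yields, independently of $\gamma$,
$$
\bigl|[\Gamma(\alpha k+\beta)]^{\gamma}\bigr|=|\Gamma(\alpha k+\beta)|^{\gamma}\ \ge\ G_k^{\,c},\qquad G_k^{\,c}\sim C_2\,k^{\,ack}.
$$
For the numerators I would reuse the estimate for $|\psi(\alpha k+\beta)|$ already proved in Proposition~\ref{Diff_ML}, which behaves like $C_1\ln k$ uniformly on $D_{\mathrm{LR}}$, for the series (\ref{Le Roy_diff1}) and (\ref{Le Roy_diff2}); for (\ref{Le Roy_diff3}) I would fix the continuous branch of $\ln\Gamma$ along $k\mapsto\alpha k+\beta$ (legitimate for $k>k_0$) and apply Stirling once more in the form $|\ln\Gamma(\alpha k+\beta)|\le\bigl|\ln|\Gamma(\alpha k+\beta)|\bigr|+|\arg\Gamma(\alpha k+\beta)|$, noting that, since $|\alpha k+\beta|=O(k)$ and $|\arg(\alpha k+\beta)|<\pi/2$, both terms are $O(k\ln k)$ uniformly on $D_{\mathrm{LR}}$.

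Combining these bounds, for every fixed $z\in\mathbb C$ and all $k>k_0$, $(\alpha,\beta,\gamma)\in D_{\mathrm{LR}}$, the general terms of the three series are dominated respectively by $M_k^{(\alpha)}:=C_1'\,k\ln k\, G_k^{-c}|z|^k$, $M_k^{(\beta)}:=C_1'\,\ln k\, G_k^{-c}|z|^k$, and $M_k^{(\gamma)}:=C_1''\,k\ln k\, G_k^{-c}|z|^k$, each being the general term of a convergent positive series because $G_k^{\,c}$ grows like $k^{ack}$ and thus overwhelms $k\ln k\cdot|z|^k$ for any $z$. By the Weierstrass $M$-test, each differentiated series converges uniformly on $D_{\mathrm{LR}}$; since (\ref{LeRoy1}) itself converges at every point, the differentiation principle of Section~\ref{Intro} and the Weierstrass theorem for analytic functions give formulas (\ref{Le Roy_diff1})--(\ref{Le Roy_diff3}), with the derivatives analytic in $\alpha$, analytic in $\beta$, and differentiable in $\gamma$ on $D_{\mathrm{LR}}$; arbitrariness of $a,b,A,B,c,C$ then completes the proof. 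I expect the only genuinely delicate point to be the control of the numerator $\ln\Gamma(\alpha k+\beta)$ in (\ref{Le Roy_diff3}): one must choose a branch of $\ln\Gamma$ consistently and verify that its argument part grows no faster than $k\ln k$ (the imaginary part of $\alpha k+\beta$ may grow linearly in $k$); once this is settled, everything else is a routine repetition of the Mittag-Leffler estimates.
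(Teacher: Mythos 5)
Your proposal is correct and takes essentially the same route as the paper, which justifies this corollary only by the brief remark preceding it: the differentiated coefficients have numerators growing like $C_1 k\ln k$ (for the $\gamma$-derivative this comes from $\ln\Gamma(\alpha k+\beta)$) while the denominators $[\Gamma(\alpha k+\beta)]^{\gamma}$ grow like $C_2 k^{({\mathrm{Re}}\,\alpha)\gamma k}$, so the Weierstrass $M$-test argument of Proposition~\ref{Diff_ML} carries over. Your write-up merely makes explicit what the paper leaves implicit --- the compact parameter boxes, the uniform lower bound $G_k^{\,c}$ on $|[\Gamma(\alpha k+\beta)]^{\gamma}|$, and the branch and growth control of $\ln\Gamma(\alpha k+\beta)$.
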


\section{Differentiation of functions represented via the Mellin-Barnes integrals}

We can also calculate the derivative of the special functions which possess the Mellin-Barnes representation. For the Mittag-Leffler function $E_{\alpha,\beta}(z)$
with $\alpha > 0, \beta\geq 0$ such representation has the form
\begin{equation}
 \label{Mellin-Barnes1}
E_{\alpha,\beta}(z) = \frac{1}{2 \pi i} \int\limits_{{\mathcal L}_{-\infty}} \frac{\Gamma(s) \Gamma(1 - s)}{\Gamma(\beta - \alpha s)} (- z)^{-s} ds, \; |\arg\, z| < \pi,
\end{equation}
where the contour of integration ${\mathcal L}_{-\infty}$ is a line which starts at $-\infty - i \varphi$ and ends at $\infty + i \varphi$ with a fixed sufficiently small $\varphi > 0$, crossing the real line at a point $c, 0 < c < 1$. Thus the contour ${\mathcal L}_{-\infty}$ leaves all poles $s = 0, - 1, -2, \ldots $ of $\Gamma(s)$ to the left and all poles $s = 1, 2, 3, \ldots $ of $\Gamma(1 - s)$ to the right. Here the branch of the multi-valued function $(-z)^{-s}$ is defined in the
complex plane cut along negative semi-axis and
$$
(-z)^{-s} = \exp \{-s[\log |z| + i\arg (-z)]\},
$$
where $\arg (-z)$ is any arbitrary chosen branch of ${\mathrm{Arg}} (-z)$ (e.g. the principal branch of it).

Formal derivatives of this integral with respect to parameters give the following results
\begin{equation}
\label{MB_diff1}
\frac{\partial E_{\alpha,\beta}(z)}{\partial \alpha} = \frac{1}{2 \pi i} \int\limits_{{\mathcal L}_{-\infty}} \frac{s \psi(\beta - \alpha s) \Gamma(s) \Gamma(1 - s)}{\Gamma(\beta - \alpha s)} (- z)^{-s} ds, \; |\arg\, z| < \pi,
\end{equation}
\begin{equation}
\label{MB_diff2}
\frac{\partial E_{\alpha,\beta}(z)}{\partial \beta} = - \frac{1}{2 \pi i} \int\limits_{{\mathcal L}_{-\infty}} \frac{\psi(\beta - \alpha s) \Gamma(s) \Gamma(1 - s)}{\Gamma(\beta - \alpha s)} (- z)^{-s} ds, \; |\arg\, z| < \pi.
\end{equation}
In a sense these formulas are similar to the Mellin-Barnes formulas. But they contain in the integrands not only ratio of the Gamma-function but also the derivative of Gamma-function.
The advantage of formulas (\ref{MB_diff1}), (\ref{MB_diff2}) is in possibility to get asymptotic results following the technique developed in
\cite{ParKam01}.

\begin{prop}
\label{ML-MB1}
Let the Mittag-Leffler function $E_{\alpha,\beta}(z)$ with $\alpha > 0, \beta\geq 0$ be represented by the Mellin-Barnes integral (\ref{Mellin-Barnes1}). Then it can be differentiable with respect to parameters and the formulas (\ref{MB_diff1}), (\ref{MB_diff2}) for derivatives
$\frac{\partial E_{\alpha,\beta}(z)}{\partial \alpha}$, $\frac{\partial E_{\alpha,\beta}(z)}{\partial \beta}$ are valid for all $\alpha > 0, \beta\geq 0$.
\end{prop}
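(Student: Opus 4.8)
The plan is to legitimise differentiation of the Mellin--Barnes integral (\ref{Mellin-Barnes1}) under the integral sign; once a majorant for the differentiated integrands that is integrable over ${\mathcal L}_{-\infty}$ and independent of the parameters is produced, the conclusion follows from the classical Leibniz rule. Fix arbitrary numbers $a,b,B$ with $0<a<b$ and $B>0$, restrict the parameters to the compact box $\alpha\in[a,b]$, $\beta\in[0,B]$, and fix $z$ with $|\arg z|<\pi$. Parametrise ${\mathcal L}_{-\infty}$ by a real variable so that along it ${\mathrm{Im}}\,s$ stays bounded while ${\mathrm{Re}}\,s$ is bounded above and tends to $-\infty$ along the two tails; write $\sigma=-{\mathrm{Re}}\,s$, so $\sigma\to+\infty$ on the tails. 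Denote by $g(s;\alpha,\beta)=\Gamma(s)\Gamma(1-s)(-z)^{-s}/\Gamma(\beta-\alpha s)$ the integrand of (\ref{Mellin-Barnes1}); it depends on the parameters only through the zero-free entire function $1/\Gamma(\beta-\alpha s)$, and
\[
\partial_\alpha\!\left[\frac{1}{\Gamma(\beta-\alpha s)}\right]=\frac{s\,\psi(\beta-\alpha s)}{\Gamma(\beta-\alpha s)},\qquad \partial_\beta\!\left[\frac{1}{\Gamma(\beta-\alpha s)}\right]=-\frac{\psi(\beta-\alpha s)}{\Gamma(\beta-\alpha s)},
\]
so that $\partial_\alpha g$ and $\partial_\beta g$ are exactly the integrands occurring in (\ref{MB_diff1}) and (\ref{MB_diff2}). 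These integrands have no singularities on ${\mathcal L}_{-\infty}$: the poles of $\psi(\beta-\alpha s)$ lie where $\beta-\alpha s\in\{0,-1,-2,\dots\}$, i.e. at the points $s=(\beta+n)/\alpha$ on the positive real axis, all situated to the right of the contour.

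The heart of the proof is a uniform majorant along the tails, obtained by multiplying four elementary estimates, all valid uniformly for $(\alpha,\beta)$ in the box with constants depending only on $a,b,B,z$. First, $|\Gamma(s)\Gamma(1-s)|=\pi/|\sin\pi s|$ is bounded on ${\mathcal L}_{-\infty}$, since $|\sin\pi s|$ stays bounded away from zero there ($|{\mathrm{Im}}\,s|$ is bounded below by a positive constant along the tails, while the compact sub-arc of the contour avoids the zeros of $\sin\pi s$). Second, putting $w=\beta-\alpha s$ one has ${\mathrm{Re}}\,w=\beta+\alpha\sigma\ge a\sigma$ with ${\mathrm{Im}}\,w$ bounded, and a Stirling-type lower bound for $|\Gamma(w)|$ (see e.g. \cite[Appendix A]{GKMR}) gives $|1/\Gamma(\beta-\alpha s)|\le C\exp\!\left(-a\sigma\ln\sigma+O(\sigma)\right)$, a super-exponential decay whose rate depends only on $a$. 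Third, $\psi(w)\sim\ln w$ as $|w|\to\infty$ in $|\arg w|<\pi-\varepsilon$, applicable since ${\mathrm{Re}}\,w\to+\infty$, whence $|s\,\psi(\beta-\alpha s)|\le C\,(1+\sigma)\ln(2+\sigma)$, at most polynomial-times-logarithmic growth. Fourth, $|(-z)^{-s}|=\exp\!\left(\sigma\ln|z|+{\mathrm{Im}}\,s\cdot\arg(-z)\right)\le C\,e^{\sigma\ln|z|}$, at worst exponential in $\sigma$. Multiplying these four bounds, both $|\partial_\alpha g(s;\alpha,\beta)|$ and $|\partial_\beta g(s;\alpha,\beta)|$ are dominated, uniformly over the box, by a function $G(s)$ that decays super-exponentially along the tails, hence $\int_{{\mathcal L}_{-\infty}}G(s)\,|ds|<\infty$.

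With such a majorant the conclusion is routine. The original integral (\ref{Mellin-Barnes1}) converges, $g(s;\cdot,\cdot)$ is differentiable for every $s\in{\mathcal L}_{-\infty}$, and $\partial_\alpha g$, $\partial_\beta g$ admit the parameter-free integrable dominating function $G$; the theorem on differentiation under the integral sign (see e.g. \cite{Rud76}) therefore permits interchanging $\partial_\alpha$ and $\partial_\beta$ with $\int_{{\mathcal L}_{-\infty}}$ on the box $[a,b]\times[0,B]$. This yields (\ref{MB_diff1}) and (\ref{MB_diff2}), and since $a,b,B$ were arbitrary, every pair $(\alpha,\beta)$ with $\alpha>0$, $\beta\ge0$ lies in such a box, so the formulas hold in full generality. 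As an aside, regarding $\beta$ as a complex variable, the same majorant together with Morera's theorem and the Weierstrass theorem shows that the integral is analytic in $\beta$ and recovers the derivative by differentiating the resulting analytic function, paralleling the treatment of the complex parameter $\beta$ given earlier.

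The step I expect to be the main obstacle is the second estimate: extracting a Stirling-type lower bound on $|\Gamma(w)|$, for ${\mathrm{Re}}\,w$ large and ${\mathrm{Im}}\,w$ bounded, that is explicit and uniform enough across the whole parameter box so that the super-exponential decay rate and all the constants are genuinely independent of $\alpha$ and $\beta$ — this is precisely what makes $G$ a legitimate majorant. A subsidiary point needing care is the behaviour of $|\sin\pi s|$ and of $\psi(\beta-\alpha s)$ near the point where ${\mathcal L}_{-\infty}$ meets the real axis, but that part of the contour is compact and bounded away from all relevant singular points, so it only contributes a harmless bounded term. The technique of \cite{ParKam01} is the natural source for the required uniform Mellin--Barnes estimates.
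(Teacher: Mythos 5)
Your proposal is correct and follows essentially the same route as the paper's own proof: both bound the differentiated integrand on the contour by combining the reflection-formula bound for $\Gamma(s)\Gamma(1-s)$, a Stirling-type lower bound for $|\Gamma(\beta-\alpha s)|$, the logarithmic asymptotics of $\psi$, and the exponential bound for $|(-z)^{-s}|$, then make the majorant uniform over a compact parameter box to justify differentiation under the integral sign. (One cosmetic slip: for $\beta$ near $0$ the points $s=(\beta+n)/\alpha$ need not all lie to the right of the contour's crossing point $c\in(0,1)$; what actually matters, as the paper notes, is that $s\,\psi(\beta-\alpha s)/\Gamma(\beta-\alpha s)$ is entire, so the integrand is regular on ${\mathcal L}_{-\infty}$ regardless.)
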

\begin{proof}
First we show the convergence of the integral in (\ref{MB_diff1}) for each fixed pair $\alpha,\beta$, $\alpha > 0, \beta\geq 0$.

We note that the ratio $\frac{ s \psi(\beta - \alpha s)}{\Gamma(\beta - \alpha s)}$ has no pole in the complex plane, and only poles of the integrand are those of $\Gamma(s) \Gamma(1-s)$. The later do not lie on the contour of integration (they are separated by the line ${\mathcal L}_{-\infty}$). So, to show  the convergence of the integral (\ref{MB_diff1}) it suffices to get a proper asymptotic of the integrand.

The psi-function $\psi(z)$ has the following asymptotics
$$
\psi(z) \sim \ln\, z - \frac{1}{2 z}, \; as\; |z| \rightarrow \infty, |\arg z| < \pi - \varepsilon.
$$
Hence on the line $s = - x \pm i \varphi$ we have that the following asymptotic formula holds
$$
|\psi(\beta - \alpha(- x \pm i \varphi))| = \ln\, |\alpha x| \left(1 + O\left(\frac{1}{x}\right)\right), \; as\; x \rightarrow +\infty.
$$
It is known that reflection formula for the Gamma-function reads
$$
\Gamma(z) \Gamma(1 - z) = \frac{\pi}{\sin \pi z}, \;\;\; z\not\in {\mathbb Z}.
$$
Thus for $z = - x \pm i \varphi, x > 0,$ we have
$$
\Gamma(- x \pm i \varphi) \Gamma(1 + x \mp i \varphi) = \frac{2\pi i}{\sin \pi (- x \pm i \varphi)} =
$$
$$\frac{- 2 \pi i}{\cos (\pi x) [e^{\mp \pi \varphi} - e^{\pm \pi \varphi}] - i \sin (\pi  x) [e^{\mp \pi \varphi} + e^{\pm \pi \varphi}]}
$$
and
$$
|\Gamma(- x \pm i \varphi) \Gamma(1 + x \mp i \varphi)| = \frac{\pi}{\sqrt{\sinh^2 \pi \varphi + \sin^2 \pi x}} < \frac{\pi}{\sinh \pi \varphi}.
$$
Therefore this product is bounded on the line ${\mathcal L}_{- \infty}$.

Modulus of the function $(-z)^{-s}$ on the curve  ${\mathcal L}_{- \infty}$ (i.e. for $s = - x \pm i \varphi$) is equal
$$
\left|(-z)^{-s}\right| = |z|^{x} e^{\pm \varphi \arg (- z)}
$$

At last, from the Stirling asymptotic formula it follows (see, e.g. \cite[(1.5.12)]{KiSrTr06})
\begin{equation}
\label{gamma4}
|\Gamma(\beta + \alpha x \mp i \alpha\varphi| = \sqrt{2\pi} (\alpha x)^{\alpha x - 1/2} e^{- \alpha x - \beta - \pi [1 \mp \varphi}]/2 \left[1 + O\left(\frac{1}{x}\right)\right], \;\;\; x  \rightarrow +\infty.
\end{equation}
Combining the above estimates we have that up to the constant the integral
$$
\int\limits_{{\mathcal L}_{-\infty}} \left|\frac{s \psi(\beta - \alpha s) \Gamma(s) \Gamma(1 - s)}{\Gamma(\beta - \alpha s)} (- z)^{-s}\right| \left|ds\right|
$$
is bounded by the sum of two positive integrals (related to different sign behind $\varphi$)
\begin{equation}
\label{MB_estimate1}
\int\limits_{0}^{+\infty} \frac{(\alpha x)(\ln\, \alpha x)  \frac{\pi}{\sinh \pi \varphi}}{(\alpha x)^{\alpha x - 1/2} e^{- \alpha x - \beta - \pi [1 \mp \varphi]/2}} |z|^{x} e^{\pm \varphi \arg (- z)} dx,
\end{equation}
which are converging since the term in the denominator $(\alpha x)^{\alpha x}$ essentially dominates all other terms.

To show that this convergence is uniform with respect to parameters we fix arbitrary positive numbers $a, b, B, R$ and suppose that
$\alpha\in [a, b], \beta\in [0, B], |z| \leq R$. Then we can estimate the expression under the integral sign in (\ref{MB_estimate1})
\begin{equation}
\label{MB_estimate2}
\frac{(\alpha x)(\ln\, \alpha x)  \frac{\pi}{\sinh \pi \varphi}}{(\alpha x)^{\alpha x - 1/2} e^{- \alpha x - \beta - \pi [1 \mp \varphi]/2}} |z|^{x} e^{\pm \varphi \arg (- z)} \leq \frac{(b x)(\ln\, b x)  \frac{\pi}{\sinh \pi \varphi}}{(a x)^{a x - 1/2} e^{- b x - B - \pi [1 \mp \varphi]/2}} |R|^{x} e^{\varphi \pi}.
\end{equation}
Both integrals of such functions exists and do not depend on parameters. This gives differentiability of the Mittag-Leffler function $E_{\alpha,\beta}(z)$ with respect to the parameter $\alpha$.

In a similar way we can prove the differentiability of $E_{\alpha,\beta}(z)$ with respect to the parameter $\beta$.
\end{proof}

\begin{rem}
 \label{MB_other}
The above approach can be applied to show differentiability  with respect to parameters other function possessing representation via the Mellin-Barnes integral, namely

\vspace{3mm}
- the Prabhakar function (three parametric Mittag-Leffler function)
\begin{equation}
 \label{Prabhakar1}
 E_{\alpha,\beta}^{\gamma}(z) = \sum\limits_{k=0}^{\infty} \frac{(\gamma)_k}{k! \Gamma(\alpha k + \beta)} z^k,
 \end{equation}
 which possesses the Mellin-Barnes-type representation of the following form
 \begin{equation}
 \label{Prabhakar2}
 E_{\alpha,\beta}^{\gamma}(z) = \frac{1}{\Gamma(\gamma)} \frac{1}{2 \pi i} \int\limits_{{\mathcal L}} \frac{\Gamma(s) \Gamma(\gamma - s)}{\Gamma(\beta - \alpha s)} (- z)^{-s} ds, \; |\arg\, z| < \pi,
\end{equation}
with properly chosen contour ${\mathcal L}$;

\vspace{3mm}
- the four-parametric Mittag-Leffler function
\begin{equation}
 \label{Prabhakar1}
 E_{\alpha_1,\beta_1; \alpha_2,\beta_2}(z) = \sum\limits_{k=0}^{\infty} \frac{z^k}{\Gamma(\alpha_1 k + \beta_1) \Gamma(\alpha_2 k + \beta_2)},
 \end{equation}
 which possesses the Mellin-Barnes-type representation of the following form
 \begin{equation}
 \label{Prabhakar2}
 E_{\alpha_1,\beta_1; \alpha_2,\beta_2}(z) =  \frac{1}{2 \pi i} \int\limits_{{\mathcal L}} \frac{\Gamma(s) \Gamma(1 - s)}{\Gamma(\beta_1 - \alpha_1 s)
 \Gamma(\beta_2 - \alpha_2 s)} (- z)^{-s} ds, \; |\arg\, z| < \pi,
\end{equation}
with properly chosen contour ${\mathcal L}$;

\vspace{3mm}
- the Le Roy-type function
\begin{equation}
 \label{LeRoy2}
 F_{\alpha,\beta}^{(\gamma)}(z) = \sum\limits_{k=0}^{\infty} \frac{z^k}{\left[\Gamma(a k + \beta)\right]^{\gamma}},
 \end{equation}
 which possesses the Mellin-Barnes-type representation of the following form
 \begin{equation}
 \label{LeRoy3}
 F_{\alpha,\beta}^{(\gamma)}(z) =  \frac{1}{2 \pi i} \int\limits_{{\mathcal L}} \frac{\Gamma(- s) \Gamma(1 + s)}{\left[\Gamma(\beta + \alpha s)\right]^{\gamma}} (-z)^{s} ds + \frac{1}{[\Gamma(\beta)]^{\gamma}}, \; |\arg\, z| < \pi,
\end{equation}
with properly chosen contour ${\mathcal L}$.
\end{rem}
\newpage
{\section{Conclusions}
The legitimacy of the derivation with respect to parameters of Mittag-Leffler and Wright functions was easy to show just with a simple equality involving the Gamma function. 
\\
We also obtain such derivatives using the Mellin-Barnes integral representation of the involving special functions.
\\
It should be noted that analogous results can be obtained by using another type of integral representations with integration along Schl\"afli-type contour initially introduced in the study of Bessel type functions. Thus in \cite{AskAns24} the differentiations of the three-parameter Mittag-Leffler functions with respect to parameters are introduced and the steepest descent method is applied for modification of the asymptotic expansions for their large parameters. Schl\"afli-type integral representations for Kelvin functions  (see \cite[Sec. 10.3]{NIST}) are introduced and discussed in \cite{Ape91}.}
\\
{Special functions are used for the descriptions of various physical phenomena. Recently, special functions of Mittag-Leffler and Wright type have been adopted in quantum physics to generalize coherent states. In this regard, see e.g. \cite{SiPeSo99,GaGiMa19,GirMai23,Gir23a,Gir23b}.}
\\
{The second motivation might be the search for PDEs with respect to the parameters which are solved by these special functions.
This study is not yet finalized and will be continued in our further publications.}

\section*{Acknowledgments}
The  work by SR was supported by the the State Program of the Scientific Investigations ``Convergence-2025'', grant 1.7.01.4.
\\
The research activity of F.M. has been carried out in the framework of the
activities of the National Group of Mathematical Physics (GNFM, INdAM).
\\
{The authors are grateful to the anonymous referees for valuable suggestions which help us to improve the presentation of the results.}

\vspace{5mm}
\noindent
The authors declare that they have no conflict of interests.

\newpage

\end{document}